\nonstopmode \numberwithin{equation}{section}
\newtheorem{thm}{Theorem}[section]
\newtheorem{cor}{Corollary}[section]
\newtheorem{lem}{Lemma}[section]
\theoremstyle{definition}
\newtheorem{example}{Example}[section]
\newcounter{minutes}\setcounter{minutes}{\time}
\newcounter{hours}\setcounter{hours}{\time}
\newcounter {own}
\def\theown {\thesection       .\arabic{own}}
\newcounter{alphabet}
\begin{document}

\title{ON THE PRE-SCHWARZIAN NORM OF CERTAIN LOGHARMONIC MAPPINGS}

\author{Md Firoz Ali}
\address{Md Firoz Ali,
Department of Mathematics,
National Institute of Technology Durgapur,
West Bengal - 713209, India.}
\email{ali.firoz89@gmail.com, firoz.ali@maths.nitdgp.ac.in}

\author{Sushil Pandit}
\address{Sushil Pandit,
Department of Mathematics,
National Institute of Technology Durgapur,
West Bengal - 713209, India.}
\email{sushilpandit15594@gmail.com}

\subjclass[2010]{Primary 30C45, 30C55}
\keywords{analytic function; harmonic function; logharmonic function; logharmonic Bloch function;  convex function; pre-Schwarzian norm}

\def\thefootnote{}
\footnotetext{ {\tiny File:~\jobname.tex,
printed: \number\year-\number\month-\number\day,
          \thehours.\ifnum\theminutes<10{0}\fi\theminutes }
} \makeatletter\def\thefootnote{\@arabic\c@footnote}\makeatother

\begin{abstract}
We connect the pre-Schwarzian norm of logharmonic mappings to the pre-Schwarzian norm of an analytic function and establish some necessary and sufficient conditions under which locally univalent logharmonic mappings have a finite pre-Schwarzian norm. We also obtain a necessary and sufficient condition for a logharmonic  function to be Bloch. Furthermore, we obtain the pre-Schwarzian norm and growth theorem for logharmonic Bloch mappings and their analytic and co-analytic parts.
\end{abstract}

\thanks{}

\maketitle
\pagestyle{myheadings}
\markboth{Md Firoz Ali, Sushil Pandit}{Pre-Schwarzian norm and Schwarzian norm}

\section{Introduction}
The pre-Schwarzian and Schwarzian derivative of locally univalent analytic mappings has become a widely used technique in the study of the geometric properties of such mappings. For instance, it can be used to get either necessary or sufficient conditions for the global univalence, or to obtain certain geometric conditions on the range of the corresponding functions involved there. Due to such beauty of these derivatives, the theory has been extended to complex valued harmonic mappings (see \cite{Chuaqui-Duren-Osgood-2003,Hernandez-Martin-2015}). Now, it is a natural question whether these derivatives can be defined for a logharmonic mapping. In this connection, in 2018, Liu and Ponnusamy \cite{Liu-Ponnusamy-2018} and in 2022, Bravo et al. \cite{Bravo-Hernandez-Ponnusamy-Venegas-2022} proposed  a definition of pre-Schwarzian and Schwarzian derivative of locally univalent logharmonic mapping. In this article, we study the pre-Schwarzian norm of logharmonic mappings and logharmonic Bloch mappings defined in the unit disk. \\

Let $\mathcal{S}_0$ denotes the class of all univalent analytic function $h$ in the unit disk $\mathbb{D}= \{z\in\mathbb{C}:|z|<1\}$ and $\mathcal{S}_1$ be the class of members $h\in\mathcal{S}_0$ with normalization $h'(0)=1.$ In a simply connected domain $\Omega,$ every complex valued harmonic mapping $f$ has a canonical representation of the form $f = h+\overline{g},$ where $h$ and $g$ are analytic functions in  $\Omega,$ called the analytic and co-analytic part of $f,$ respectively. The Jacobian of $f=h+\overline{g}$  is defined by $ J_f(z)=|f_z|^2 - |f_{\overline{z}}|^2=|h'(z)|^2-|g'(z)|^2.$ A logharmonic mapping $f$ is a solution of the nonlinear elliptic partial differential equation
\begin{align}\label{p2-001}
\frac{\overline{f_{\overline{z}}}}{\overline{f}}=\omega\frac{f_z}{f}
\end{align}
where the second dilatation function $\omega:\mathbb{D}\rightarrow\mathbb{D}$ is analytic and the Jacobian of $f$ is given by $J_f=|f_z|^2-|f_{\overline{z}}|^2=|f_z|^2(1-|\omega|^2).$ Note that $J_f$ is always positive, and so $f$ is a sense-preserving logharmonic mapping. If $f$ is non-constant and vanishes only at $z = 0,$ then $f$ admits the following representation
\begin{align*}
f(z) = z^m|z|^{2\beta m}h(z)\overline{g(z)},
\end{align*}
where $m$ is non-negative integer, ${\rm Re\,}(\beta)>-1/2$ and $h$ and $g$ are analytic functions in $\mathbb{D}$ such that $g(0)=1$ and $h(0)\neq 0$ (see \cite{Abdulhadi-1988,Abdulhadi-Ali-2012}). On the other hand, It has been shown that \cite{Abdulhadi-1988} if $f$ is a nonvanishing logharmonic mapping, then $f$ can be expressed as
\begin{align}\label{p2-005}
f(z) = h(z)\overline{g(z)},
\end{align}
where $h$ and $g$ are nonvanishing analytic functions in the unit disk $\mathbb{D}.$ Further, if the mapping $f$ given by \eqref{p2-005} is locally univalent and sense-preserving, then $h'g \neq 0$ in $\mathbb{D}$ and the second complex dilatation $\omega$ is given by
\begin{align}\label{p2-010}
\omega=g'h/gh'.
\end{align}

It is easy to see that if $f=h\overline{g}$ is a nonvanishing logharmonic mapping defined in $\mathbb{D}$, then $\log{f}=
\log{h}+\overline{\log{g}}$ is a harmonic mapping defined in $\mathbb{D},$ and its corresponding dilatations are the same.
As in \cite{Bravo-Hernandez-Ponnusamy-Venegas-2022}, we consider the logharmonic mappings of the form \eqref{p2-005} in the unit disk $\mathbb{D}$ such that $h$ is an analytic and locally univalent mapping, not necessarily different from zero, and $g$ is nonvanishing in $\mathbb{D}$ which happen when $f$ is locally univalent. There are several fundamental results on logharmonic mappings defined on the unit disk $\mathbb{D}$ (see \cite{Abdulhadi-Ali-2012,Liu-Ponnusamy-2018,Mao-Ponnusamy-Wang-2013}). \\

The classical Bloch theorem asserts the existence of a positive constant $m$ such that for any holomorphic mapping $h$ in the unit disk $\mathbb{D}$, the image $h(\mathbb{D})$ contains a Schlicht disk of radius $m$. By Schlicht disk, we mean a disk that is the univalent image of some region in $\mathbb{D}$. The Bloch constant is defined as the supremum of such constants $m.$ An analytic function $h$  defined in $\mathbb{D}$ is called a Bloch function (\cite{Anderson-Clunie-Pommerenke-1974, Pommerenke-1970}) if
\begin{align*}
\beta_h=\sup_{z\in\mathbb{D}}(1-|z|^2)|h'(z)|<\infty.
\end{align*}
The space $\mathcal{B}$ of analytic Bloch functions in the unit disk $\mathbb{D}$ forms a Banach space under the norm given by
\begin{align*}
||h||_{\mathcal{B}}=|h(0)|+\sup_{z\in\mathbb{D}}(1-|z|^2)|h'(z)|.
\end{align*}
A nonvanishing logharmonic mapping $f(z) = h(z)\overline{g(z)}$ in $\mathbb{D}$ is said to be a logharmonic Bloch function if
\begin{align}\label{p2-015}
\beta_f=\sup_{z\in\mathbb{D}}(1-|z|^2)\left\{\left|\frac{h'(z)}{h(z)}\right|+\left|\frac{g'(z)}{g(z)}\right|\right\}<\infty,
\end{align}
where $h$ and $g$ are analytic in $\mathbb{D}.$ Let $\mathcal{B}_{Lh}$ denote the space of all logharmonic Bloch functions.
The space $\mathcal{B}_{Lh}$ forms a complex Banach space with the norm given by (see \cite{Liu-Ponnusamy-2018})
\begin{align*}
||f||_{\mathcal{B}_{Lh}}=|f(0)|+\sup_{z\in\mathbb{D}}(1-|z|^2)\left\{\left|\frac{h'(z)}{h(z)}\right|+\left|\frac{g'(z)}{g(z)}\right|\right\}.
\end{align*}
This norm is known as the logharmonic Bloch norm and elements of this space are known as logharmonic Bloch functions. Liu and Ponnusamy \cite{Liu-Ponnusamy-2018} proved that the space $\mathcal{B}_{Lh}$ is linear and affine invariant. Indeed, they proved that if $f=h\overline{g}\in\mathcal{B}_{Lh}$ then $f^a\overline{f}^b\in\mathcal{B}_{Lh}$ for all $a, b\in\mathbb{C}$ (affine invariance) and $f\circ\phi\in\mathcal{B}_{Lh}$ for all automorphism $\phi(z)=(z-\alpha)/(1-\overline{\alpha}z),~\alpha\in\mathbb{D}$ (linear invariance).\\


\section{Pre-Schwarzian and Schwarzian norm}
For a locally univalent analytic function $h$ defined in a simply connected domain $\Omega$, the pre-Schwarzian derivative $P_h$ and the Schwarzian derivative $S_h$ are defined as
\begin{align}\label{p2-020}
P_h(z)=\frac{h''(z)}{h'(z)}\quad\text{and}\quad S_h(z) = P_h'(z)-\frac{1}{2}P_h^2(z)=\frac{h'''(z)}{h''(z)}-\frac{3}{2}\left(\frac{h''(z)}{h'(z)}\right)^2,
\end{align}
respectively. Moreover, the pre-Schwarzian norm and the Schwarzian norm of $h$ are defined by
\begin{align}\label{p2-025}
||P_h|| = \sup_{z \in \mathbb{D}}(1-|z|^2)|P_h(z)|\quad\text{and}\quad ||S_h|| = \sup_{z \in \mathbb{D}}(1-|z|^2)^2|S_h(z)|,
\end{align}
respectively. Several important global univalence criteria for a locally univalent analytic function $h$ were obtained using the notions of pre-Schwarzian and Schwarzian derivatives of $h$. For a univalent function $h$, it is well known that $||P_h||\leq 6$ and $||S_h||\leq 6$ (see \cite{Kruas-1932}) and these estimates are sharp. On the other hand, for a locally univalent function $h$ it is known that if $||P_h||\leq 1$ (see \cite{Becker-1972}, \cite{Becker-Pommerenke-1984}) or $||S_h||\leq 2$ (see \cite{Nehari-1949}), then the function $h$ is univalent in $\mathbb{D}$. In 1976, Yamashita \cite{Yamashita-1976} proved that $||P_h||$ is finite if and only if $h$ is uniformly locally univalent in $\mathbb{D},$ that is, there exists a constant $\rho>0$ such that $h$ is univalent on the hyperbolic disk $|(z-a)/(1-\overline{a}z)|<\tanh\rho$ of radius $\rho$ for every $a\in\mathbb{D}.$\\


In 2015, Hern{\'a}ndez and Mart{\'i}n \cite{Hernandez-Martin-2015} defined the Schwarzian derivative of a locally univalent harmonic mapping $f=h+\overline{g}$ by\\
\begin{equation}\label{p2-030}
\begin{split}
S_f &= \left(\log J_f\right)_{zz}-\frac{1}{2}\left(\log J_f\right)_z^2\\
&= S_h+\frac{\overline{\omega}}{1-|\omega|^2}\left(\frac{h''}{h'}\omega'-\omega''\right)-\frac{3}{2}\left(\frac{\omega'\overline{\omega}}{1-|\omega|^2}\right)^2,
\end{split}
\end{equation}\\
where $S_h$ is the classical Schwarzian derivative of the analytic function $h$, $J_f$ is the Jacobian and $\omega$ is the dilatation of $f.$ The pre-Schwarzian derivative of $f=h+\overline{g}$ is defined as
\begin{align}\label{p2-035}
P_f = \left(\log(J_f)\right)_z = \frac{h''}{h'}-\frac{\overline{\omega}\omega'}{1-|\omega|^2}.
\end{align}
This notion of pre-Schwarzian and Schwarzian derivatives of harmonic functions is a generalization of the classical pre-Schwarzian and Schwarzian derivatives of analytic functions. Note that when $f$ is analytic, we have $\omega=0.$ It is also easy to see that $S_f = (P_f)_z-\frac{1}{2}(P_f)^2.$ As in the case of analytic functions, for a sense-preserving locally univalent harmonic mapping $f = h+\bar{g}$ in the unit disk $\mathbb{D}$, the pre-Schwarzian norm $||P_f||$ and the Schwarzian norm $||S_f||$ are defined by \eqref{p2-025}. Hern{\'a}ndez and Mart{\'i}n \cite{Hernandez-Martin-2015} proved that a sense-preserving harmonic mapping is uniformly locally univalent if and only if its Schwarzian norm is finite. Later, Liu and Ponnusamy \cite{Liu-Ponnusamy--2018} proved that a sense-preserving harmonic mapping is uniformly locally univalent if and only if its pre-Schwarzian norm is finite. \\

For a locally univalent logharmonic mapping $f$ of the form \eqref{p2-005}, Bravo et al. \cite{Bravo-Hernandez-Ponnusamy-Venegas-2022} defined the pre-Schwarzian derivative $P_f$ as
\begin{align}\label{p2-040}
P_f=\left(\log(J_f)\right)_z =\frac{h''}{h'}+\frac{g'}{g}-\frac{\overline{\omega}\omega'}{1-|\omega|^2}.
\end{align}
where $J_f$ is the  Jacobian and $\omega$ is the dilatation of the function $f.$ Further, the pre-Schwarzian norm $||P_f||$ is defined by \eqref{p2-025}. If $f$ is a sense-preserving logharmonic mapping of the form \eqref{p2-005} and $\phi$ is a locally univalent analytic function for which the composition $f\circ\phi$ is well defined, then the function $f\circ\phi$ is again a sense-preserving logharmonic mapping and the pre-Schwarzian derivative of it is given by
\begin{align*}
P_{f\circ\phi}=P_f(\phi)\phi'+P_\phi.
\end{align*}
For more information about the properties of the pre-Schwarzian derivative of a sense-preserving logharmonic mapping, we refer to \cite{Bravo-Hernandez-Ponnusamy-Venegas-2022}.\\

The theory of logharmonic mapping has become an exciting field of research in the last few years. Our primary goal of this article is to establish necessary and sufficient conditions under which a locally univalent logharmonic mapping has a finite pre-Schwarzian norm.  We also obtain a necessary and sufficient condition for a logharmonic  function to be Bloch. Furthermore, we obtain the pre-Schwarzian norm and growth theorem for logharmonic Bloch mappings and their analytic and co-analytic parts.


\section{Main Results}\label{background And Main Results}
Recently, Liu and Ponnusamy \cite{Liu-Ponnusamy--2018} have established a connection between the pre-Schwarzian norm of a sense-preserving harmonic mapping $f=h+\overline{g}$ and that of the analytic part $h$ and showed that the finiteness of their pre-Schwarzin norms behaves alike. We present a similar result for sense-preserving logharmonic mapping, which, in certain circumstances,  provides a necessary and sufficient condition for a logharmonic function to have a finite pre-Schwarzian norm.

\begin{thm}\label{p2-045}
Let $f=h\overline{g}$ be a sense-preserving logharmonic mapping with the dilatation $\omega$ given by \eqref{p2-010} and the analytic function $\psi$ be such that $\psi'=h'g$ in $\mathbb{D}.$ Then either, $||P_f||=||P_{\psi}||=\infty$ or, both  $||P_f||$ and $||P_{\psi}||$ are finite. If $||P_f||$ is finite, then
\begin{align}\label{p2-050}
\Big|||P_f||-||P_{\psi}||\Big|\leq 1,
\end{align}
and the constant $1$ is sharp.
\end{thm}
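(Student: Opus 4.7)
My plan is to reduce the problem to a single pointwise identity and then apply the Schwarz-Pick lemma. Since $\psi' = h'g$, logarithmic differentiation gives
\[
P_\psi = \frac{(h'g)'}{h'g} = \frac{h''}{h'} + \frac{g'}{g},
\]
which is exactly the analytic portion of the formula \eqref{p2-040} for $P_f$. Subtracting yields the key identity
\[
P_f(z)-P_\psi(z)=-\frac{\overline{\omega(z)}\,\omega'(z)}{1-|\omega(z)|^2}.
\]

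Next I would estimate the right-hand side uniformly. Because $\omega:\mathbb{D}\to\mathbb{D}$ is analytic, the Schwarz-Pick lemma gives $(1-|z|^{2})|\omega'(z)|/(1-|\omega(z)|^{2})\le 1$, while trivially $|\omega(z)|\le 1$; multiplying these two bounds,
\[
(1-|z|^{2})\,|P_f(z)-P_\psi(z)|\le |\omega(z)|\le 1 \qquad\text{for every } z\in\mathbb{D}.
\]
Two applications of the triangle inequality then yield both $||P_f||\le ||P_\psi||+1$ and $||P_\psi||\le ||P_f||+1$, which simultaneously prove that finiteness is equivalent for the two norms and, in the finite case, establish \eqref{p2-050}.

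For sharpness of the constant $1$, I would exhibit a one-parameter family forcing the difference to $1$. The natural choice is $\omega(z)=z$, which saturates Schwarz-Pick as $|z|\to 1$, paired with analytic data whose pre-Schwarzian norm vanishes in a limit. Taking $h(z)=e^{az}$ and $g(z)=e^{az^{2}/2}$ for small $a>0$, one verifies $g'h/(gh')=z$, computes $P_\psi(z)=a(1+z)$ (so that $||P_\psi||=32a/27\to 0$), and observes $P_f(z)=a(1+z)-\overline{z}/(1-|z|^{2})$. Evaluating at $z=-r$ and letting $r\to 1^{-}$ gives $(1-r^{2})|P_f(-r)|=|a(1-r)^{2}(1+r)+r|\to 1$, so $||P_f||\ge 1$. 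Combined with the upper bound $||P_f||\le ||P_\psi||+1$, this forces $||P_f||-||P_\psi||\to 1$ as $a\to 0^{+}$.

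I expect the sharpness construction to be the main obstacle. The pointwise bound is essentially automatic once one notices that $P_\psi$ captures the analytic part of $P_f$; the difficulty in sharpness lies in producing a family where the two Schwarz-Pick factors $|\omega(z)|$ and $(1-|z|^{2})|\omega'(z)|/(1-|\omega(z)|^{2})$ simultaneously approach $1$ at a common boundary point, while $||P_\psi||$ can be made arbitrarily small. The exponential family above succeeds because $\omega(z)=z$ asymptotically saturates Schwarz-Pick near $\partial\mathbb{D}$ and the scaling parameter $a$ can be taken as close to $0$ as desired.
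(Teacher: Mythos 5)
Your proof is correct. The core estimate is exactly the paper's argument: the identity $P_f-P_\psi=-\overline{\omega}\omega'/(1-|\omega|^2)$ followed by the Schwarz--Pick lemma and the triangle inequality, giving both the equivalence of finiteness and the bound \eqref{p2-050}. Where you diverge is in the sharpness step. The paper exhibits an explicit extremal pair, e.g.\ $h(z)=1/(1-z)$ (or $h(z)=z/(1-z)$) with $\omega(z)=z$, and computes the two norms exactly, $\|P_f\|=5$ and $\|P_\psi\|=6$, so the difference equals $1$ for a single function. You instead use the one-parameter family $h(z)=e^{az}$, $g(z)=e^{az^2/2}$ (which indeed has $\omega(z)=z$), for which $\|P_\psi\|=32a/27\to 0$, while the evaluation at $z=-r$, $r\to1^-$, gives $\|P_f\|\ge 1$; combined with $\|P_f\|\le\|P_\psi\|+1$ this makes the difference tend to $1$ as $a\to0^+$. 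This asymptotic argument is a perfectly valid proof that the constant $1$ cannot be improved, and it has the advantage of requiring only a one-sided lower bound plus the already-proved upper bound, avoiding any exact computation of a supremum over the disk; the paper's choice buys a cleaner statement (the bound is actually attained by a concrete mapping) at the cost of having to verify the exact values of $\|P_f\|$ and $\|P_\psi\|$.
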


From Theorem \ref{p2-045}, one can find the estimate of the pre-Schwarzian norm of sense-preserving logharmonic mapping $f=h\overline{g}$ when the associated analytic function $\psi=h'g$ has certain geometry. We list a few which come immediately from the theorem.

\begin{cor}\label{p2-052}
Let $f=h\overline{g}$ be a sense-preserving logharmonic mapping with the dilatation $\omega$ given by \eqref{p2-010} and the analytic function $\psi$ be such that $\psi'=h'g$ in $\mathbb{D}.$
\begin{itemize}
\item[(i)] If $\psi$ is univalent, then $||P_f||\leq7.$
\item[(ii)] If $\psi$ is convex mapping, then $||P_f||\leq5.$
\end{itemize}
\end{cor}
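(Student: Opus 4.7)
The strategy is to combine the comparison estimate \eqref{p2-050} of Theorem \ref{p2-045} with the classical pre-Schwarzian norm bounds available for the auxiliary analytic function $\psi$. Recall that Theorem \ref{p2-045} contains two assertions: a dichotomy, namely that $||P_f||$ and $||P_{\psi}||$ are either both finite or both infinite, and the quantitative bound $\big|\,||P_f|| - ||P_{\psi}||\,\big| \leq 1$ in the finite case. In particular, whenever we can prove $||P_{\psi}|| \leq C$ for some explicit constant $C$, finiteness together with the bound gives
\[
||P_f|| \leq ||P_{\psi}|| + 1 \leq C+1.
\]
Thus the entire proof reduces to recording the correct classical bound on $||P_{\psi}||$ in each of the two cases.

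For part (i), the hypothesis that $\psi$ is univalent in $\mathbb{D}$ places us directly in the setting of the Kraus estimate already cited in the introduction, which asserts the sharp bound $||P_{\psi}|| \leq 6$ (with extremal the Koebe function). Since this is finite, Theorem \ref{p2-045} ensures that $||P_f||$ is finite as well, and \eqref{p2-050} yields $||P_f|| \leq 6 + 1 = 7$.

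For part (ii), I would invoke the sharper classical bound $||P_{\psi}|| \leq 4$ for convex univalent $\psi$. The derivation is standard and short: convexity means that $p(z) := 1 + z\psi''(z)/\psi'(z)$ is a Carath\'eodory function (positive real part, $p(0)=1$); applying the Schwarz lemma to the Mobius transform $(p-1)/(p+1)$ gives $|z\psi''(z)/\psi'(z)| \leq 2|z|/(1-|z|)$, and hence
\[
(1-|z|^2)\left|\frac{\psi''(z)}{\psi'(z)}\right| \leq 2(1+|z|) \leq 4,
\]
attained asymptotically by $\psi(z) = z/(1-z)$. Substituting $C = 4$ into the reduction above gives $||P_f|| \leq 5$, as required.

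No substantive obstacle is expected; the only point to be careful about is not to apply \eqref{p2-050} before confirming that $||P_f||$ is finite, since the inequality in Theorem \ref{p2-045} is only stated under that hypothesis. The dichotomy portion of the theorem supplies exactly this finiteness, transferring it from the analytic side (where it is guaranteed by the classical Kraus and convex estimates) to the logharmonic side.
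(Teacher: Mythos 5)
Your proposal is correct and follows essentially the same route as the paper: the paper likewise quotes the classical bounds $||P_{\psi}||\leq 6$ for univalent $\psi$ and $||P_{\psi}||\leq 4$ for convex $\psi$, and then applies the inequality \eqref{p2-050} of Theorem \ref{p2-045} to obtain the stated bounds. The only difference is that you spell out the derivation of the convex bound and explicitly note that the dichotomy in Theorem \ref{p2-045} supplies the finiteness of $||P_f||$ needed before invoking \eqref{p2-050}, details the paper leaves implicit.
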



Corollary \ref{p2-052} shows that the  univalence of $\psi$ provides a bound of the pre-Schwarzian norm $||P_f||$ of a logharmonic mapping $f.$ Now we wish to find a condition on $||P_f||$ under which the associated analytic function $\psi$ is univalent. Let us suppose that $||\omega^*||=\sup_{z\in\mathbb{D}}\omega^*(z),$ where $\omega^*$ is the  hyperbolic derivative of an analytic function $\omega:\mathbb{D}\rightarrow\mathbb{D}$ given by
\begin{align*}
\omega^*(z)=\frac{|\omega'(z)|(1-|z|^2)}{1-|\omega(z)|^2}.
\end{align*}
\begin{thm}\label{p2-055}
Let $f=h\overline{g}$ be a sense-preserving logharmonic mapping with the dilatation $\omega$ given by \eqref{p2-010} and the analytic function $\psi$ be such that $\psi'=h'g$ in $\mathbb{D}.$  If
\begin{align}\label{p2-060}
||P_f||+||\omega^*||\leq 1,
\end{align}
 then $\psi$ is univalent in $\mathbb{D}.$
\end{thm}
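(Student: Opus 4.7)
The plan is to reduce this to Becker's classical univalence criterion (stated in the excerpt): if $\psi$ is locally univalent in $\mathbb{D}$ and $\|P_\psi\| \le 1$, then $\psi$ is univalent in $\mathbb{D}$. Thus it suffices to verify that $\psi$ is locally univalent and that $\|P_\psi\| \le \|P_f\| + \|\omega^*\|$, since then the hypothesis \eqref{p2-060} yields $\|P_\psi\|\le 1$.

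First I would handle local univalence. Because $f=h\overline{g}$ is sense-preserving, the text records that $h'g\neq 0$ in $\mathbb{D}$, so $\psi'=h'g$ is nonvanishing and $\psi$ is locally univalent in $\mathbb{D}$.

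Next I would relate the two pre-Schwarzians. Directly,
\begin{align*}
P_\psi = \frac{\psi''}{\psi'} = \frac{h''g + h'g'}{h'g} = \frac{h''}{h'}+\frac{g'}{g},
\end{align*}
and, comparing with the formula \eqref{p2-040} for $P_f$, one obtains the clean identity
\begin{align*}
P_\psi = P_f + \frac{\overline{\omega}\,\omega'}{1-|\omega|^2}.
\end{align*}
Multiplying by $(1-|z|^2)$ and using $|\overline{\omega}|=|\omega|<1$, together with the definition of the hyperbolic derivative $\omega^*$, gives
\begin{align*}
(1-|z|^2)|P_\psi(z)| \;\le\; (1-|z|^2)|P_f(z)| + |\omega(z)|\,\omega^*(z) \;\le\; \|P_f\| + \|\omega^*\|.
\end{align*}
Taking the supremum over $z\in\mathbb{D}$ yields $\|P_\psi\| \le \|P_f\| + \|\omega^*\| \le 1$ by \eqref{p2-060}, and Becker's theorem concludes the proof.

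The argument is essentially a direct calculation followed by an application of a classical criterion; there is no serious obstacle. The only point that requires a moment's care is bounding $|\overline{\omega}\omega'|/(1-|\omega|^2)$ by $\omega^*$ rather than just by the naive estimate, which is what allows the hyperbolic derivative $\omega^*$ (and hence $\|\omega^*\|$) to appear in place of a larger quantity on the right-hand side.
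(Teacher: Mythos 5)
Your proposal is correct and follows essentially the same route as the paper: the identity $P_\psi - P_f = \overline{\omega}\omega'/(1-|\omega|^2)$, the bound $\|P_\psi\|\le\|P_f\|+\|\omega^*\|\le 1$, and Becker's univalence criterion. Your explicit check of local univalence via $\psi'=h'g\neq 0$ is a small addition the paper leaves implicit, but the argument is the same.
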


Properties of a sense-preserving logharmonic mapping of the form $f(z)=h(z)\overline{g(z)}$ have been studied in connection with the analytic function $zh(z)/g(z)$ in different context. Indeed, the analytic function $zh(z)/g(z)$ plays a vital role in shaping the geometry of $f(z).$  Many geometric aspects such as starlikeness, close-to-convexity, etc. of $f(z)$ is directly related to the function $zh(z)/g(z).$ For more detail, see the survey article \cite{Abdulhadi-Ali-2012} and bibliography therein. In this regard, we establish a connection between  a nonvanishing logharmonic mapping $f$ and its analytic part $h.$

\begin{thm}\label{p2-065}
Let $f(z)=h(z)\overline{g(z)}$ be a nonvanishing logharmonic mapping with dilatation $\omega(z).$  Then $f(z)$ is logharmonic Bloch if and only if $\log{h(z)}$ is analytic Bloch function.
\end{thm}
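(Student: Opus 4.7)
The plan is to turn the logharmonic Bloch condition into the analytic Bloch condition for $\log h$ by eliminating $g'/g$ in favor of $h'/h$ via the dilatation equation, and then to note that the resulting quantity is squeezed between $|h'/h|$ and $2|h'/h|$.

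First I would use the dilatation formula \eqref{p2-010}, namely $\omega = g'h/(gh')$, which, since $f$ is nonvanishing and sense-preserving (so $h',g,h \ne 0$), rearranges to
\begin{equation*}
\frac{g'(z)}{g(z)} = \omega(z)\,\frac{h'(z)}{h(z)}.
\end{equation*}
Because $h$ is nonvanishing, $\log h$ is a well-defined analytic branch with $(\log h)'=h'/h$, so the analytic Bloch norm of $\log h$ on $\mathbb{D}$ is exactly $\sup_{z\in\mathbb{D}}(1-|z|^2)|h'(z)/h(z)|$.

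Next I would substitute the displayed identity into the logharmonic Bloch seminorm \eqref{p2-015}:
\begin{equation*}
\left|\frac{h'(z)}{h(z)}\right|+\left|\frac{g'(z)}{g(z)}\right| = \left|\frac{h'(z)}{h(z)}\right|\bigl(1+|\omega(z)|\bigr).
\end{equation*}
Since $\omega:\mathbb{D}\to\mathbb{D}$, we have $1\leq 1+|\omega(z)|< 2$ on $\mathbb{D}$, so
\begin{equation*}
\left|\frac{h'(z)}{h(z)}\right|\;\leq\;\left|\frac{h'(z)}{h(z)}\right|+\left|\frac{g'(z)}{g(z)}\right|\;\leq\;2\left|\frac{h'(z)}{h(z)}\right|.
\end{equation*}
Multiplying through by $(1-|z|^2)$ and taking the supremum over $z\in\mathbb{D}$ immediately gives
\begin{equation*}
\beta_{\log h}\;\leq\;\beta_f\;\leq\;2\,\beta_{\log h},
\end{equation*}
so $\beta_f<\infty$ if and only if $\beta_{\log h}<\infty$, which is precisely the desired equivalence.

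There is no serious obstacle here; the proof is essentially a one-line algebraic identity followed by a two-sided estimate coming from $|\omega|<1$. The only point that merits care is to justify that the logarithm $\log h$ is well defined as an analytic function on the simply connected domain $\mathbb{D}$, which is immediate from the standing assumption that $f$, and in particular $h$, is nonvanishing on $\mathbb{D}$.
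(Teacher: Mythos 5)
Your argument is correct and follows essentially the same route as the paper: the paper's forward direction uses the identical identity $g'/g=\omega\,h'/h$ together with $|\omega|<1$ to bound $(1-|z|^2)|g'/g|$ by $(1-|z|^2)|h'/h|$, and its converse simply drops the nonnegative term $|g'/g|$, which is exactly the lower half of your two-sided pointwise estimate. Your sandwich $\beta_{\log h}\leq\beta_f\leq 2\beta_{\log h}$ is just a compact packaging of the same two steps, so nothing further is needed.
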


Next, we provide a necessary and sufficient condition for a nonvanishing logharmonic mapping to have a finite pre-Schwarzian norm. Note that Theorem \ref{p2-045} provides a similar condition for a logharmonic mapping that may vanish.
\begin{thm}\label{p2-070}
Let $f=h\overline{g}$ be a nonvanishing logharmonic Bloch mapping in the unit disk $\mathbb{D}$ with dilatation $\omega.$ Then  the pre-Schwarzian norm $||P_f||$ of $f$ is finite if and only if $||P_h||$ is finite. Moreover, if $||P_h||$ is finite then
\begin{align*}
\Big|||P_f||-||P_h||\Big|\leq \beta_{\log{g}}+1
\end{align*}
and the estimate is sharp.
\end{thm}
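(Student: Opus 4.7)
The plan is to start from the explicit formula for $P_f$ in \eqref{p2-040} and compare it with $P_h = h''/h'$. A direct subtraction yields
\begin{equation*}
P_f(z) - P_h(z) \;=\; \frac{g'(z)}{g(z)} \;-\; \frac{\overline{\omega(z)}\,\omega'(z)}{1-|\omega(z)|^2},
\end{equation*}
so the whole theorem is really an estimate on this difference in the weighted sup norm.

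Multiplying by $(1-|z|^2)$ and applying the triangle inequality gives
\begin{equation*}
(1-|z|^2)|P_f(z) - P_h(z)| \;\le\; (1-|z|^2)\left|\frac{g'(z)}{g(z)}\right| \;+\; |\omega(z)|\cdot\frac{(1-|z|^2)|\omega'(z)|}{1-|\omega(z)|^2}.
\end{equation*}
The first summand is bounded by $\beta_{\log g}$ by definition, since $g$ is nonvanishing so $\log g$ is a single-valued analytic function with $(\log g)' = g'/g$. The second summand is bounded by $1$: the Schwarz--Pick inequality for the analytic self-map $\omega\colon\mathbb{D}\to\mathbb{D}$ gives $(1-|z|^2)|\omega'|/(1-|\omega|^2)\le 1$, and $|\omega(z)|<1$. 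Taking supremum, $\|P_f - P_h\|\le \beta_{\log g}+1$. Crucially, since $f\in\mathcal{B}_{Lh}$, the quantity $\beta_{\log g}$ is dominated by $\beta_f$ (directly from \eqref{p2-015}) and is therefore finite.

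The equivalence of finiteness and the numerical bound then follow from the reverse triangle inequality for the weighted sup norm: from $|P_f|\le|P_h|+|P_f-P_h|$ one gets $\|P_f\|\le\|P_h\|+\|P_f-P_h\|$ and symmetrically, so
\begin{equation*}
\big|\,\|P_f\|-\|P_h\|\,\big|\;\le\;\|P_f-P_h\|\;\le\;\beta_{\log g}+1.
\end{equation*}
In particular, $\|P_h\|<\infty$ implies $\|P_f\|<\infty$ and conversely, using the finiteness of $\beta_{\log g}$.

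The main obstacle is sharpness. I would construct an explicit extremal example by normalizing $h$ so that $P_h\equiv 0$ (e.g., an affine $h(z)=z+c$ with $|c|\ge 1$, which is nonvanishing and locally univalent in $\mathbb{D}$), then selecting a disc automorphism $\omega$ so that the Schwarz--Pick inequality is saturated and $|\omega(z)|\to 1$ at a preferred boundary point, and finally solving the first-order linear ODE $g'/g = \omega\,(h'/h)$ for $g$. The delicate verification is to arrange that the maximum of $(1-|z|^2)|g'/g|$ and the limiting value $|\omega|\to 1$ are attained at the \emph{same} boundary point with arguments chosen so that the two summands in $P_f - P_h$ add in modulus; then $\|P_f-P_h\|$ approaches $\beta_{\log g}+1$, and since $\|P_h\|=0$ one gets $\|P_f\|-\|P_h\|=\|P_f-P_h\|\to\beta_{\log g}+1$, showing that the constant cannot be improved.
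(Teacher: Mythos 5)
Your derivation of the inequality itself is correct and is essentially the paper's own argument: the same decomposition $P_f-P_h=\frac{g'}{g}-\frac{\overline{\omega}\,\omega'}{1-|\omega|^2}$ from \eqref{p2-040}, the Schwarz--Pick bound $|\omega|\,\omega^*\le 1$ for the second term, $\beta_{\log g}$ for the first, and the (reverse) triangle inequality for the weighted sup norm to get both the equivalence of finiteness and the bound $\big|\,\|P_f\|-\|P_h\|\,\big|\le\beta_{\log g}+1$. Your observation that $\beta_{\log g}\le\beta_f<\infty$ follows directly from \eqref{p2-015} is in fact slightly more direct than the paper's route through Theorem \ref{p2-065}.

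The genuine gap is the sharpness assertion, which is part of the statement and occupies most of the paper's proof. You only outline a construction and explicitly defer ``the delicate verification,'' but that verification is exactly what has to be supplied; moreover, the example you propose is partly unworkable. With $h(z)=z+c$ and $|c|>1$ the plan fails: since $g'/g=\omega\,h'/h$, one has $(1-|z|^2)|g'(z)/g(z)|\le(1-|z|^2)/(|c|-|z|)\to0$ as $|z|\to1$, whereas the term $|\omega(z)|\,\omega^*(z)$ can approach $1$ only as $|z|\to1$ (because $|\omega|<1$ inside $\mathbb{D}$); so the two summands are never simultaneously close to $\beta_{\log g}$ and to $1$, and $\|P_f\|=\|P_f-P_h\|$ stays strictly below $\beta_{\log g}+1$ for every such $f$. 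Only the boundary case $|c|=1$ can succeed, and it does: taking $h(z)=1+z$, $\omega(z)=(t-z)/(1-tz)$ with $t\in(0,1)$, and $g=\exp\big(\int_0^z\omega(\zeta)\,d\zeta/(1+\zeta)\big)$, one checks that $f=h\overline{g}\in\mathcal{B}_{Lh}$, $\beta_{\log g}=2$, and along $z=-r$ both terms are positive with $(1-r^2)|P_f(-r)|=\frac{(t+r)(2+r)}{1+tr}\to3=\beta_{\log g}+1$ as $r\to1^-$, while $\|P_h\|=0$; hence equality actually holds in the norm inequality. Carried out, this would even be a cleaner extremal than the paper's family ($H(z)=1/(1-z)$ with the same dilatation, where $\|P_F\|\to7=\|P_H\|+\beta_{\log G}+1$ only as $t\to1^-$). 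As written, however, you neither fix a working example nor perform the computation, so the sharpness claim remains unproved.
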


\begin{thm}\label{p2-090}
Let $f(z)=h(z)\overline{g(z)}\in\mathcal{B}_{Lh}$ be a nonvanishing logharmonic Bloch mapping with $h(0)=g(0)=1=h'(0).$ Then $\log{f}$ is an uniformly locally univalent harmonic mapping.
\end{thm}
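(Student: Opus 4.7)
The plan is to apply the Liu--Ponnusamy characterization recalled in the introduction---a sense-preserving harmonic mapping is uniformly locally univalent in $\mathbb{D}$ if and only if its (harmonic) pre-Schwarzian norm is finite---so the problem reduces to bounding $||P_{\log f}||$. Since $f=h\overline g$ is nonvanishing on the simply connected disk $\mathbb{D}$, the branches $H:=\log h$ and $G:=\log g$ are analytic, and $\log f=H+\overline G$ is a harmonic mapping. A short computation shows its analytic dilatation is $\omega_{\log f}=G'/H'=g'h/(gh')=\omega$, the dilatation of $f$ itself. Applying \eqref{p2-035} with analytic part $H=\log h$,
\[
P_{\log f}(z)=\frac{H''(z)}{H'(z)}-\frac{\overline{\omega(z)}\,\omega'(z)}{1-|\omega(z)|^2} = \frac{h''(z)}{h'(z)}-\frac{h'(z)}{h(z)}-\frac{\overline{\omega(z)}\,\omega'(z)}{1-|\omega(z)|^2},
\]
and comparing with \eqref{p2-040} gives the convenient identity $P_{\log f}=P_f-(h'/h+g'/g)$.

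The logharmonic Bloch hypothesis bounds $(1-|z|^2)(|h'/h|+|g'/g|)\le\beta_f$, which combined with this identity yields $||P_{\log f}||\le||P_f||+\beta_f$. Hence it suffices to establish $||P_f||<\infty$. By Theorem~\ref{p2-045}, this is equivalent to $||P_\psi||<\infty$, where $\psi$ is the analytic primitive of $h'g$ (and $\psi'(0)=h'(0)g(0)=1$ by the normalizations); equivalently, by Theorem~\ref{p2-070}, to the finiteness of $||P_h||$.

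The main obstacle will be establishing this finiteness directly from the hypotheses. My plan is to combine (i) the Bloch estimate $(1-|z|^2)|h'/h|\le\beta_f$, (ii) the Schwarz--Pick inequality $\omega^*(z)\le 1$ for the analytic self-map $\omega:\mathbb{D}\to\mathbb{D}$, and (iii) the nowhere vanishing of $h'/h$ together with the normalization $(h'/h)(0)=1$. A natural route is to note that, since $h'/h$ does not vanish on $\mathbb{D}$, $\log(h'/h)$ is a well-defined analytic function with $(\log(h'/h))(0)=0$; a Borel--Carath\'eodory or Schwarz--Pick type growth estimate should then bound $(1-|z|^2)\,|(\log(h'/h))'(z)| = (1-|z|^2)\,|h''/h' - h'/h|$, and combined with the Bloch bound this furnishes $(1-|z|^2)|h''/h'|\le C(\beta_f)$, i.e., $||P_h||<\infty$. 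Retracing the chain of reductions then gives $||P_{\log f}||<\infty$, and the Liu--Ponnusamy criterion yields that $\log f$ is a uniformly locally univalent harmonic mapping.
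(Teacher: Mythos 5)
Your reduction is set up correctly: the identity $P_{\log f}=P_f-\bigl(h'/h+g'/g\bigr)$ is right, and via the Liu--Ponnusamy criterion the problem is indeed equivalent to showing $\|P_{\log f}\|<\infty$. But the step you flag as ``the main obstacle'' is a genuine gap, and in fact it cannot be filled as proposed: finiteness of $\|P_h\|$ (equivalently of $\|P_f\|$ or $\|P_\psi\|$, by Theorems \ref{p2-045} and \ref{p2-070}) does \emph{not} follow from the hypotheses, so no Borel--Carath\'eodory or Schwarz--Pick argument can bound $(1-|z|^2)\bigl|(\log(h'/h))'(z)\bigr|$ in terms of $\beta_f$ alone. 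Concretely, take $g\equiv 1$ (so $\omega\equiv 0$) and
\[
\frac{h'(z)}{h(z)}=\exp\!\left(1-\Bigl(\frac{1+z}{1-z}\Bigr)^{1/2}\right),\qquad h(0)=1 .
\]
Then $h$ is zero-free, $h(0)=g(0)=h'(0)=1$, and since ${\rm Re}\,\bigl(\frac{1+z}{1-z}\bigr)^{1/2}\ge 0$ we get $|h'/h|\le e$, so $f=h\overline{g}$ is a normalized nonvanishing logharmonic Bloch mapping with $\beta_f\le e$; moreover $h'/h$ is nonvanishing and equals $1$ at the origin, so all three ingredients (i)--(iii) of your plan are present. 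Nevertheless
\[
(1-|z|^2)\Bigl|\bigl(\log\tfrac{h'}{h}\bigr)'(z)\Bigr|
=\frac{1-|z|^2}{|1+z|^{1/2}\,|1-z|^{3/2}}
=\Bigl(\frac{1+r}{1-r}\Bigr)^{1/2}\longrightarrow\infty \qquad(z=r\to 1^-),
\]
whence $(1-|z|^2)|h''/h'|$ is unbounded and $\|P_h\|=\|P_f\|=\infty$. The Bloch hypothesis controls the hyperbolic growth of the modulus of $h'/h$ but says nothing about its argument, which is exactly what $h''/h'-h'/h$ measures; your chain of reductions therefore terminates at a false intermediate claim.

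For comparison, the paper never attempts to bound $\|P_h\|$ or $\|P_f\|$: it works with $F=\log f$ directly, proves that the family $\mathcal{J}$ of such logarithms is affine and linearly invariant, and then combines Graf's estimate $\|P_F\|\le 2+2\alpha_0$ for linearly invariant families (\cite{Graf-2016}) with the Anderson--Clunie--Pommerenke coefficient bound $|a_2|\le 2\|H\|_\beta$ for the Bloch function $H=\log h$ (Lemma \ref{p2-250}). You should note, however, that the example above also stresses that argument: there $\log f=\log h$ has infinite pre-Schwarzian norm although every hypothesis of Theorem \ref{p2-090} is satisfied, which points at the step where the family constant $\alpha_0=\sup_{F\in\mathcal{J}^0}|a_2|$ is bounded by $2\|H\|_\beta$, a quantity attached to an individual member rather than uniform over $\mathcal{J}^0$. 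So the difficulty you encountered is not an artifact of your method; but as written, your proposal does not give a proof, since its key analytic claim fails.
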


In 1970, Pommerenke \cite{Pommerenke-1970} represented an analytic Bloch mapping $h$ in terms of the logarithm of some univalent analytic function. Indeed, He has shown that an analytic mapping $h$ is Bloch if and only if there exists a function $g(z)$ analytic and univalent in the unit disk and a constant $a > 0$
such that $h(z)=a\log{g'(z)}.$ Here, we present a similar type of representation for a nonvanishing logharmonic Bloch mapping.

\begin{thm}\label{p2-092}
A nonvanishing logharmonic mapping $f(z)=h(z)\overline{g(z)}$ is Bloch if and only if it is of the form $f(z)=H'(z)^{\lambda_1}\overline{G'(z)^{\lambda_2}},$ where $\lambda_1,\lambda_2>0$ are finite real numbers and $H, G\in\mathcal{S}_0.$
\end{thm}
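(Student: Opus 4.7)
The plan is to decouple the Bloch condition on $f$ into two independent Bloch conditions on its logarithmic factors and then apply Pommerenke's representation theorem to each factor separately. Assume first that $f=h\overline{g}$ is a nonvanishing logharmonic Bloch mapping. Because $h$ and $g$ are nonvanishing analytic on the simply connected disk $\mathbb{D}$, I fix single-valued branches $\log h$ and $\log g$. The finiteness of $\beta_f$ in \eqref{p2-015}, together with the nonnegativity of each summand inside the supremum, gives
\[
\sup_{z\in\mathbb{D}}(1-|z|^2)\left|\frac{h'(z)}{h(z)}\right|\leq \beta_f \quad\text{and}\quad \sup_{z\in\mathbb{D}}(1-|z|^2)\left|\frac{g'(z)}{g(z)}\right|\leq \beta_f,
\]
so both $\log h$ and $\log g$ are classical analytic Bloch functions.

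Next I invoke Pommerenke's representation theorem twice, once for $\log h$ and once for $\log g$: there exist $H,G\in\mathcal{S}_0$ and positive real numbers $\lambda_1,\lambda_2>0$ such that $\log h(z)=\lambda_1\log H'(z)$ and $\log g(z)=\lambda_2\log G'(z)$. Exponentiating (with the same branch choice) yields $h(z)=H'(z)^{\lambda_1}$ and $g(z)=G'(z)^{\lambda_2}$, where $H'(z)^{\lambda_1}$ is interpreted as $\exp(\lambda_1\log H'(z))$, and therefore
\[
f(z)=h(z)\overline{g(z)}=H'(z)^{\lambda_1}\,\overline{G'(z)^{\lambda_2}},
\]
which is precisely the desired representation.

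For the converse, assume $f(z)=H'(z)^{\lambda_1}\overline{G'(z)^{\lambda_2}}$ with $H,G\in\mathcal{S}_0$ and $\lambda_1,\lambda_2>0$. Univalence of $H$ and $G$ makes $H',G'$ nonvanishing on $\mathbb{D}$, so $h:=H'^{\lambda_1}$ and $g:=G'^{\lambda_2}$ are nonvanishing analytic and $f=h\overline{g}$ is a nonvanishing logharmonic mapping. A direct computation gives $h'/h=\lambda_1 H''/H'$ and $g'/g=\lambda_2 G''/G'$. Applying the classical pre-Schwarzian bound $(1-|z|^2)|H''(z)/H'(z)|\leq 6$ valid for every $H\in\mathcal{S}_0$ (and likewise for $G$) yields $\beta_f\leq 6(\lambda_1+\lambda_2)<\infty$, hence $f\in\mathcal{B}_{Lh}$.

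The main technical point I need to handle is the use of Pommerenke's theorem with a \emph{positive real} scaling factor: given an analytic Bloch function $u$, one selects any $a\geq\beta_u$, defines $H$ by solving $H''/H'=u'/a$ with any chosen initial value for $H'(0)$, and then invokes Becker's univalence criterion $(1-|z|^2)|H''/H'|=(1-|z|^2)|u'|/a\leq 1$ to conclude $H\in\mathcal{S}_0$; the functions $u$ and $a\log H'$ then differ by a constant that is absorbed into $H'(0)$. The rest of the argument is bookkeeping with the branches of the logarithms and the fractional powers, which is routine since $h$ and $g$ are nonvanishing on the simply connected domain $\mathbb{D}$.
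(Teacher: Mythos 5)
Your proposal is correct and follows essentially the same route as the paper: both directions hinge on Pommerenke's representation (Lemma \ref{p2-300}) applied to the Bloch functions $\log h$ and $\log g$, and on the classical bound $(1-|z|^2)|H''/H'|\leq 6$ for univalent $H$. The only cosmetic difference is in the converse, where you estimate $\beta_f\leq 6(\lambda_1+\lambda_2)$ directly from both factors instead of bounding only $\beta_{\log h}\leq 6\lambda_1$ and invoking Theorem \ref{p2-065} as the paper does; your variant is, if anything, slightly more self-contained since it does not rely on the dilatation inequality underlying that theorem.
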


Here, we note that $\lambda_1>\lambda_2$ whenever $H=G.$ Indeed, the logharmonic mapping $f=h\overline{g}=H'(z)^{\lambda_1}\overline{H'(z)^{\lambda_2}}$ has dilatation $\omega=\lambda_2/\lambda_1.$ We conclude this section with growth and distortion type theorems for a special type of logharmonic mappings $f=h\overline{g}$  along with for its analytic part $h$ and co-analytic part $g.$

\begin{thm}\label{p2-095}
Let $f(z)=h(z)\overline{g(z)}=H'(z)^{\lambda_1}\overline{G'(z)^{\lambda_2}}$ be a nonvanishing logharmonic mapping with $H, G\in\mathcal{S}_1$ and for some $\lambda_1,\lambda_2>0.$ Then for $|z|=r<1,$
\begin{itemize}
\item[(i)] $\left[\frac{1-r}{(1+r)^3}\right]^{\lambda_1}\leq |h(z)|\leq \left[\frac{1+r}{(1-r)^3}\right]^{\lambda_1}$ and  $\left[\frac{1-r}{(1+r)^3}\right]^{\lambda_2}\leq |g(z)|\leq \left[\frac{1+r}{(1-r)^3}\right]^{\lambda_2},$

\item[(ii)] $\left[\frac{1-r}{(1+r)^3}\right]^{\lambda_1+\lambda_2}\leq |f(z)|\leq \left[\frac{1+r}{(1-r)^3}\right]^{\lambda_1+\lambda_2},$

\item[(iii)] $\sup_{z\in\mathbb{D}}(1-|z|^2)\left|\frac{f_z(z)}{f(z)}\right|\leq 6\lambda_1.$

\end{itemize}
All the estimates are sharp for $\lambda_1>\lambda_2.$
\end{thm}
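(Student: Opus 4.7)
The plan is to reduce all three assertions, together with their sharpness, to well-known facts about the class $\mathcal{S}_1$: the classical growth and distortion theorem, and the bound $\|P_H\|\le 6$ for the pre-Schwarzian norm of a univalent function. Since $H,G\in\mathcal{S}_1$ are univalent, $H'$ and $G'$ are non-vanishing on the simply connected disk, so the powers $H'^{\lambda_1}$ and $G'^{\lambda_2}$ are unambiguously defined by fixing the principal branch of the logarithm (with value $0$ at the origin, using $H'(0)=G'(0)=1$); in particular $|h(z)|=|H'(z)|^{\lambda_1}$ and $|g(z)|=|G'(z)|^{\lambda_2}$.

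For (i), the classical distortion theorem for $\mathcal{S}_1$ gives
\[
\frac{1-r}{(1+r)^3}\le |H'(z)|\le \frac{1+r}{(1-r)^3},\qquad |z|=r<1,
\]
and the same for $G'$. Since $\lambda_1,\lambda_2>0$, raising these inequalities to the powers $\lambda_1$ and $\lambda_2$ respectively yields the stated bounds on $|h|$ and $|g|$. For (ii), we use $|f(z)|=|h(z)|\,|g(z)|$ and multiply the corresponding one-sided inequalities from (i).

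For (iii), the key observation is that, since $h$ is analytic and $\overline{g}$ is anti-analytic, $f_z(z)=h'(z)\overline{g(z)}$, so $f_z/f=h'/h$. Differentiating $h=H'^{\lambda_1}$ gives $h'/h=\lambda_1 H''/H'$, hence
\[
(1-|z|^2)\left|\frac{f_z(z)}{f(z)}\right|=\lambda_1(1-|z|^2)\left|\frac{H''(z)}{H'(z)}\right|\le \lambda_1\|P_H\|\le 6\lambda_1,
\]
where in the last step we invoke the classical univalence bound $\|P_H\|\le 6$ recalled in the introduction.

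For sharpness under $\lambda_1>\lambda_2$, take $H=G=K$, where $K(z)=z/(1-z)^2$ is the Koebe function. Then $K\in\mathcal{S}_1$, the remark preceding the theorem applies to give a genuine logharmonic mapping with dilatation $\omega=\lambda_2/\lambda_1\in(0,1)\subset\mathbb{D}$, and $K$ is simultaneously extremal for the distortion theorem (attained at $z=\pm r$ on the real axis) and for the pre-Schwarzian bound (attained in the limit $z\to 1^-$). Substituting this choice into (i), (ii), (iii) realizes equality, proving sharpness. There is no serious obstacle in this argument; the only point requiring mild care is the unambiguous definition of the fractional powers $H'^{\lambda_1}$ and $G'^{\lambda_2}$, which is handled by the non-vanishing of $H',G'$ on the simply connected disk.
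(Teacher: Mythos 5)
Your proposal is correct and follows essentially the same route as the paper: parts (i) and (ii) from the classical distortion theorem for $\mathcal{S}_1$ raised to the powers $\lambda_1,\lambda_2$, part (iii) from $f_z/f=\lambda_1 H''/H'$ together with $\|P_H\|\le 6$, and sharpness via the Koebe function $k(z)=z/(1-z)^2$ with $H=G=k$, where $\lambda_1>\lambda_2$ keeps the dilatation $\lambda_2/\lambda_1$ in $\mathbb{D}$. Your extra remarks on the branch of the fractional powers and on the limit $z\to 1^-$ for $\|P_k\|$ only make explicit what the paper leaves implicit.
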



\section{Proof of Main Results}\label{Proof of Main Results}
In this section, we sequentially prove all the results stated in Section \ref{background And Main Results}.
\begin{proof}[\textbf{Proof of Theorem \ref{p2-045}}]
Since $f=h\overline{g}$ is a sense-preserving logharmonic mapping with the dilatation $\omega=g'h/h'g$, from \eqref{p2-040} the pre-Schwarzian derivative of $f$ is
\begin{align}\label{p2-110}
P_f=\frac{h''}{h'}+\frac{g'}{g}-\frac{\overline{\omega}\omega'}{1-|\omega|^2}.
\end{align}
As $\psi'(z)=h'(z)g(z)\neq0,$ it follows that the pre-Schwarzian derivative $P_{\psi}$ of $\psi$ is given by
\begin{align*}
P_{\psi}=\frac{\psi''}{\psi}=\frac{h''}{h'}+\frac{g'}{g}
\end{align*}
and so from \eqref{p2-110}, we have
\begin{align}\label{p2-130}
P_{\psi}-P_f=\frac{\overline{\omega}\omega'}{1-|\omega|^2}.
\end{align}
Therefore, by Schwarz-Pick lemma
\begin{align*}
(1-|z|^2)\Big||P_{\psi}|-|P_f|\Big|\leq & (1-|z|^2)\Big|P_{\psi}-P_f\Big| =  (1-|z|^2)\left|\frac{\overline{\omega}\omega'}{1-|\omega|^2}\right|\\
\leq & \frac{1-|z|^2}{1-|z|^2}=1.
\end{align*}
This shows that $||P_f||$ is finite if and only if $||P_{\psi}||$ is finite. Moreover, if $||P_f||<\infty$ then
\begin{align*}
\Big|||P_f||-||P_{\psi}||\Big|
& \leq \sup_{z\in\mathbb{D}} (1-|z|^2)\Big||P_f(z)|-|P_{\psi}(z)|\Big|\leq 1.
\end{align*}

To show that the constant $1$ is sharp, we consider the logharmonic mapping $f(z)=h(z)\overline{g(z)}$ with $h(z)=1/(1-z)$ and dilatation $\omega(z)=z.$ It  is quite easy to get $g(z)=e^{-z}/(1-z)$ and $\psi'(z)=e^{-z}/(1-z)^3.$ Thus we see that $f$ is nonvanishing. It is a simple exercise to see that the pre-Schwarzian derivative $P_f$ of $f$ is
\begin{align*}
P_f(z)=\frac{2+z}{1-z}-\frac{\overline{z}}{1-|z|^2}
\end{align*}
and the pre-Schwarzian derivative $P_{\psi}$ of the associated analytic function $\psi=h'g$ is
\begin{align*}
P_{\psi}(z)=\frac{2+z}{1-z}.
\end{align*}
Then it is easy to see that $||P_f||=5$ and $||P_{\psi}||=6$ which implies that
\begin{align*}
\Big|||P_{\psi}||-||P_f||\Big|=1.
\end{align*}

Additionally, the sharpness can also be seen from the logharmonic mapping $f(z)=h(z)\overline{g(z)}$ with $h(z)=z/(1-z)$ and dilatation $\omega(z)=z.$ A simple calculation gives $g(z)=1/(1-z)$ and $\psi'(z)=h'(z)g(z)=1/(1-z)^3.$ Note that $\psi(z)=z(2-z)/2(1-z)^2$ is univalent in $\mathbb{D}.$ In this case, a direct computation shows
\begin{align*}
P_f(z)=\frac{3}{1-z}-\frac{\overline{z}}{1-|z|^2}~~\quad\text{and}\quad P_{\psi}(z)=\frac{3}{1-z}
\end{align*} from which we get
\begin{align*}
||P_f||=5\quad\text{and}\quad ||P_{\psi}||=6.
\end{align*}
Therefore,
$$\Big|||P_{\psi}||-||P_f||\Big|=1.$$

\end{proof}

\begin{proof}[\textbf{Proof of Corollary \ref{p2-052}}]
It is well known that $||P_{\psi}||\leq6$ if $\psi$ is univalent and $||P_{\psi}||\leq4$ if $\psi$ is convex analytic in $\mathbb{D}.$ Therefore both the results follows immediately from Theorem \ref{p2-050}.
\end{proof}

\begin{proof}[\textbf{Proof of Theorem \ref{p2-055}}]
Let $f=h\overline{g}$ be a logharmonic mapping with the dilatation $\omega=g'h/{gh'}$ and the analytic function $\psi$ is such that $\psi'=h'g.$ Then  from \eqref{p2-130} and \eqref{p2-060}, it follows that
\begin{align*}
||P_{\psi}||\leq  & ||P_f||+\sup_{z\in\mathbb{D}}\frac{|\overline{\omega}||\omega'|}{1-|\omega|^2}(1-|z|^2)\\
\leq & ||P_f||+||\omega^*||\\
\leq & 1.
\end{align*}
Therefore, $\psi$ is univalent by Becker's criterion \cite{Becker-1972}.\\

To show that the constant $1$ is sharp, we consider the logharmonic mapping $f(z)=h(z)\overline{g(z)},$ with $g(z)=1.$ Then the associated analytic function $\psi$ is given by $\psi(z)=h(z),~z\in\mathbb{D}.$ Thus, the sharpness follows from the fact that the same is sharp in the analytic case.
\end{proof}

\begin{proof}[\textbf{Proof of Theorem \ref{p2-065}}]
Let $f(z)=h(z)\overline{g(z)}$ be a logharmonic mapping with dilatation $\omega=g'h/gh'$ and $\log h(z)$ be an analytic Bloch function. Then
\begin{align}\label{p2-150}
\sup_{z\in\mathbb{D}}(1-|z|^2)\left|\frac{h'(z)}{h(z)}\right|<\infty
\end{align}
and so,
\begin{align}\label{p2-160}
\beta_{\log{g}}=\sup_{z\in\mathbb{D}}(1-|z|^2)\left|\frac{g'(z)}{g(z)}\right|= & \sup_{z\in\mathbb{D}}(1-|z|^2)\left|\frac{\omega(z)h'(z)}{h(z)}\right|\\
\leq & \sup_{z\in\mathbb{D}}(1-|z|^2)\left|\frac{h'(z)}{h(z)}\right|\nonumber\\
< & \infty.\nonumber
\end{align}

Thus from \eqref{p2-150} and \eqref{p2-160}, it follows that
\begin{align*}
\beta_f= & \sup_{z\in\mathbb{D}}(1-|z|^2)\left\{\left|\frac{h'(z)}{h(z)}\right|+\left|\frac{g'(z)}{g(z)}\right|\right\}\\
\leq & \sup_{z\in\mathbb{D}}(1-|z|^2)\left|\frac{h'(z)}{h(z)}\right|+\sup_{z\in\mathbb{D}}(1-|z|^2)\left|\frac{g'(z)}{g(z)}\right|\\
<& \infty.
\end{align*}
Therefore, $f$ is logharmonic Bloch mapping by \eqref{p2-015}.\\

Conversely, suppose that $f$ is a logharmonic Bloch mapping. Then
\begin{align*}
\beta_f= & \sup_{z\in\mathbb{D}}(1-|z|^2)\left\{\left|\frac{h'(z)}{h(z)}\right|+\left|\frac{g'(z)}{g(z)}\right|\right\}\\
= & \sup_{z\in\mathbb{D}}\left\{\left|(1-|z|^2)\frac{h'(z)}{h(z)}\right|+\left|(1-|z|^2)\frac{g'(z)}{g(z)}\right|\right\} \\
< & \infty
\end{align*}
which gives
$$\sup_{z\in\mathbb{D}}(1-|z|^2)\left|\frac{h'(z)}{h(z)}\right|=\sup_{z\in\mathbb{D}}\left|(1-|z|^2)\frac{h'(z)}{h(z)}\right|<\infty.$$ Thus, the function $\log{h}$ is analytic Bloch.
\end{proof}

\begin{proof}[\textbf{Proof of Theorem \ref{p2-070}}]
Since $f=h\overline{g}$ is a logharmonic Bloch mapping, by Theorem \ref{p2-065}, the function $\log{h(z)}$ is an analytic Bloch. Therefore,
$$\sup_{z\in\mathbb{D}}(1-|z|^2)\left|\frac{h'(z)}{h(z)}\right|<\infty$$
and so from \eqref{p2-160}, we get $\beta_{\log{g}}<\infty$.
The pre-Schwarzian derivative of $f$ is
\begin{align*}
P_f(z)=P_h(z)+\frac{g'(z)}{g(z)}-\frac{\omega'(z)\overline{\omega(z)}}{1-|\omega(z)|^2},
\end{align*}
and so by Schwarz-Pick lemma, we get
\begin{align}\label{p2-175}
(1-|z|^2)\Big||P_f(z)|-|P_h(z)|\Big|&\leq(1-|z|^2)|P_f(z)-P_h(z)|\\
                                 &=(1-|z|^2)\left|\frac{g'(z)}{g(z)}-\frac{\overline{\omega(z)}\omega'(z)}{1-|\omega(z)|^2}\right|\nonumber\\
                                 &\leq\sup_{z\in\mathbb{D}}(1-|z|^2)\left|\frac{g'(z)}{g(z)}\right|+1\nonumber\\
                                 &= \beta_{\log{g}}+1 \nonumber
\end{align}
which shows that $||P_f||$ is finite if and only if $||P_h||$ is finite. This immediately gives
\begin{align}\label{p2-190}
\Big|||P_f||-||P_h||\Big|\leq \beta_{\log{g}}+1.
\end{align}

To show the estimate is sharp, we consider the logharmonic mapping $F(z)=H(z)\overline{G(z)}$ with $H(z)=e^{h(z)}$ and $h(z)=\int_{0}^z\frac{du}{1-u}$ and dilatation $\omega(z)=\frac{t-z}{1-tz},~~t\in(0,1).$ As
\begin{align*}
\beta_{\log{H}}=\sup_{z\in\mathbb{D}}(1-|z|^2)\left|\frac{H'(z)}{H(z)}\right|\leq \sup_{z\in\mathbb{D}}\frac{(1-|z|^2)}{1-|z|}=2,
\end{align*}
it follows that $\log{H}$ is analytic Bloch and hence, $F$ is logharmonic Bloch. It is easy to find that
$$P_H=\frac{2}{1-z}$$
and
$$\frac{G'(z)}{G(z)}=\omega(z)\frac{H'(z)}{H(z)}=\frac{t-z}{1-tz}\frac{1}{1-z}.$$
Therefore, $||P_H||=4$ and
$$\beta_{\log{G}}=\sup\limits_{z\in\mathbb{D}}(1-|z|^2)\left|\frac{G'(z)}{G(z)}\right|=\sup\limits_{z\in\mathbb{D}}(1-|z|^2)\left|\frac{t-z}{1-tz}\frac{1}{1-z}\right|=2.$$
Thus from \eqref{p2-190}, the pre-Schwarzian norm of $F$ satisfies
$||P_F||\leq ||P_H||+\beta_{\log{G}}+1\leq 7.$ \\

On the other hand, the pre-Schwarzian norm of $F$ is given by
\begin{align}\label{p2-220}
||P_F||= & \sup_{z\in\mathbb{D}}(1-|z|^2)|P_F(z)|\\
= & \sup_{z\in\mathbb{D}}(1-|z|^2)\left|P_h(z)+(1+\omega(z))h'(z)-\frac{\omega'(z)\overline{\omega(z)}}{1-|\omega(z)|^2}\right|\nonumber\\
=& \sup_{z\in\mathbb{D}}(1-|z|^2)\left|\frac{1}{1-z}+\frac{(1+t)(1-z)}{1-tz}\frac{1}{1-z}-\frac{\overline{z}-t}{(1-tz)(1-|z|^2)}\right|\nonumber.
\end{align}
Along the positive real axis,
\begin{align*}
N(t)= & \sup_{0\leq r<1}\left|1+r+\frac{(1+t)(1-r^2)}{1-tr}-\frac{(r-t)(1-r^2)}{(1-tr)(1-r^2)}\right|\\
= & \sup_{0\leq r<1}\left|1+r+\frac{(1+t)(1-r^2)-(r-t)}{1-tr}\right|\\
=&  \sup_{0\leq r<1}E(r)
\end{align*}
where
 $$E(r)=1+r+\frac{(1+t)(1-r^2)-(r-t)}{1-tr}.$$
Clearly
 $$E'(r)=\frac{(2t+1)\{r(tr-2)+t\}}{(1-tr)^2}.$$
It is an easy exercise to see that the roots of $E'(r)=0$ are $r_0=(1-\sqrt{1-t^2})/t$ and $r_1=(1+\sqrt{1-t^2})/t$ but $r_1$ does not lie in $(0,1).$ So the maximum value of $E(r)$ is attained at $r_0.$ Hence,
\begin{align}\label{p2-235}
N_t=E(r_0)=\frac{2 - 2\sqrt{1 - t^2}+t(4 + t - 4\sqrt{1 - t^2})}{t^2}.
\end{align}
From \eqref{p2-220} and \eqref{p2-235}, it follows that $N_t\leq||P_F||\leq 7.$ Also, it is simple to see that $N_t\rightarrow 7$ as $t\rightarrow 1^{-}.$ This shows that the inequality \eqref{p2-190} is sharp.
\end{proof}

The extremal function constructed in the previous theorem can be generalized, which is provided in the next example.

\begin{example}
Let $\mathcal{M}$ denote the family of sense-preserving logharmonic mappings $F=H(z)\overline{G(z)}$ with $H(z)=e^{h(z)}$ and $h(z)=\int_{0}^z\frac{d\zeta}{1-\epsilon(\zeta)}$ where $\epsilon:\mathbb{D}\rightarrow\mathbb{D}$ is an analytic function such that $\epsilon(0)=0$ and with the dilatation $\omega:\mathbb{D}\rightarrow\mathbb{D}$. It is easy to show that every $F\in\mathcal{M}$ is logharmonic Bloch and it satisfies the sharp estimate $||P_F||\leq7$.
\end{example}

A family $\mathcal{F}$ of sense-preserving harmonic mappings $F = H + \overline{G}$ in $\mathbb{D},$ normalized by $ H(0) = G(0) = 0$ and $H'(0) = 1$ is said to be linear invariant if it is closed under the  Koebe transform
\begin{align*}
   L_\phi(F)(z) = \frac{F(\phi(z))-F(\phi(0))}{F_z(\phi(0))\phi'(0)},\quad \phi(z)=(z-\alpha)/(1-\overline{\alpha}z),~\alpha\in\mathbb{D},
\end{align*}
and is said to be affine invariant if it is closed under the affine transform
\begin{align*}
A_s(F)(z) = \frac{F(z)-\overline{s F(z)}}{1-\overline{s}G'(0)}, \quad |s|<1.
\end{align*}
Now, we recall some known results required to prove Theorem \ref{p2-090}.

\begin{lem}\label{p2-240}\cite{Liu-Ponnusamy-2018}
If $f=h\overline{g}\in\mathcal{B}_{Lh},$ then
\begin{itemize}
\item[(i)] $f^a\overline{f}^b\in\mathcal{B}_{Lh}$ for any $a,b\in\mathbb{C}$ (affine invariance),
\item[(ii)] $f\circ\phi\in\mathcal{B}_{Lh}$ for any automorphism $\phi$ of $\mathbb{D}$ (linear invariance).
\end{itemize}
\end{lem}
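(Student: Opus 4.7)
The plan is to prove both invariance properties by direct computation of the logarithmic derivatives of the analytic and co-analytic parts of the transformed mappings, since the logharmonic Bloch seminorm $\beta_f$ in \eqref{p2-015} is expressed entirely in terms of $h'/h$ and $g'/g$.

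For part (i), I would first identify the analytic/co-analytic decomposition of $f^{a}\overline{f}^{b}$. Writing $f=h\overline{g}$ and using branches of the power functions wherever necessary (as usual for logharmonic mappings), one gets
\begin{equation*}
f^{a}\overline{f}^{b} \;=\; h^{a}\overline{g}^{a}\,\overline{h}^{b}g^{b} \;=\; \bigl(h^{a}g^{b}\bigr)\,\overline{\bigl(h^{b}g^{a}\bigr)} \;=\; H\overline{G},
\end{equation*}
so $H=h^{a}g^{b}$ and $G=h^{b}g^{a}$ are analytic with logarithmic derivatives
\begin{equation*}
\frac{H'}{H}=a\frac{h'}{h}+b\frac{g'}{g},\qquad \frac{G'}{G}=b\frac{h'}{h}+a\frac{g'}{g}.
\end{equation*}
Applying the triangle inequality and taking the supremum of $(1-|z|^{2})\{|H'/H|+|G'/G|\}$, one obtains $\beta_{f^{a}\overline{f}^{b}}\le(|a|+|b|)\,\beta_{f}$, which is finite. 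This establishes affine invariance.

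For part (ii), given an automorphism $\phi$ of $\mathbb{D}$, the decomposition is immediate: $f\circ\phi=(h\circ\phi)\,\overline{(g\circ\phi)}$. By the chain rule,
\begin{equation*}
\frac{(h\circ\phi)'(z)}{(h\circ\phi)(z)}=\phi'(z)\,\frac{h'(\phi(z))}{h(\phi(z))},
\end{equation*}
and similarly for $g\circ\phi$. The crucial ingredient is the Schwarz--Pick identity $(1-|z|^{2})|\phi'(z)|=1-|\phi(z)|^{2}$, valid because $\phi$ is a disk automorphism. Substituting and using that $\phi$ is a bijection of $\mathbb{D}$, one gets
\begin{equation*}
\sup_{z\in\mathbb{D}}(1-|z|^{2})\!\left\{\left|\tfrac{(h\circ\phi)'}{h\circ\phi}\right|+\left|\tfrac{(g\circ\phi)'}{g\circ\phi}\right|\right\} = \sup_{w\in\mathbb{D}}(1-|w|^{2})\!\left\{\left|\tfrac{h'(w)}{h(w)}\right|+\left|\tfrac{g'(w)}{g(w)}\right|\right\},
\end{equation*}
so $\beta_{f\circ\phi}=\beta_{f}<\infty$, i.e.\ linear invariance holds in fact with equality of seminorms.

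There is no genuinely hard step here; the main point of care will be handling of branches when defining $f^{a}\overline{f}^{b}$ for complex exponents. Because $f=h\overline{g}$ is assumed nonvanishing (as is standard for a logharmonic mapping whose Bloch seminorm is defined via $h'/h$ and $g'/g$), $h$ and $g$ are nonvanishing and one can select single-valued analytic branches of $h^{a},\,g^{a},\,h^{b},\,g^{b}$ in $\mathbb{D}$, after which the logarithmic derivative calculation above is unambiguous and the result follows.
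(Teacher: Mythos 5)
The paper does not prove this lemma at all: it is imported verbatim from Liu and Ponnusamy \cite{Liu-Ponnusamy-2018}, so there is no in-paper argument to compare against. Your direct computation is essentially the standard proof and is correct: for (i) the bound $\beta_{f^{a}\overline{f}^{b}}\le(|a|+|b|)\beta_{f}$ follows from the logarithmic-derivative identities, and for (ii) the Schwarz--Pick identity $(1-|z|^{2})|\phi'(z)|=1-|\phi(z)|^{2}$ together with the bijectivity of $\phi$ gives $\beta_{f\circ\phi}=\beta_{f}$ exactly. One small correction: your decomposition of the co-analytic factor is off by conjugations on the exponents. Since $\overline{g}^{\,a}=\overline{g^{\overline{a}}}$ and $\overline{h}^{\,b}=\overline{h^{\overline{b}}}$ for complex exponents, the correct splitting is $f^{a}\overline{f}^{b}=\bigl(h^{a}g^{b}\bigr)\overline{\bigl(h^{\overline{b}}g^{\overline{a}}\bigr)}$, i.e.\ $G=h^{\overline{b}}g^{\overline{a}}$ rather than $h^{b}g^{a}$; this is exactly the form the paper uses later in the proof of Theorem \ref{p2-090}, where $g_a=h^{\overline{s/a}}g^{\overline{1/a}}$. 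The slip is harmless for the seminorm estimate because $|\overline{a}|=|a|$ and $|\overline{b}|=|b|$, but it matters if one wants the dilatation of the transformed map. Your handling of branches via the nonvanishing of $h$ and $g$ on the simply connected $\mathbb{D}$ is the right way to make the powers single-valued.
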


\begin{lem}\label{p2-250} \cite{Anderson-Clunie-Pommerenke-1974}
Let $H(z)=\sum\limits_{n=1}^\infty a_nz^n$ be an analytic Bloch function. Then
\begin{align*}
|a_n|\leq 2||H||_\beta=2\sup_{z\in\mathbb{D}}(1-|z|^2)|H'(z)|~~\quad\text{for}~~ n\geq1.
\end{align*}
\end{lem}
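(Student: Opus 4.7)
The plan is to extract the Taylor coefficients of $H$ from $H'$ via Cauchy's integral formula on a circle $\{|z|=r\}$ with $r\in(0,1)$, and to bound the integrand using the defining Bloch estimate $|H'(z)|\le \|H\|_\beta/(1-|z|^2)$. Writing $H'(z)=\sum_{k\ge 1}ka_kz^{k-1}$ and applying Cauchy's formula for the $(n-1)$st Taylor coefficient of $H'$ gives, for each $n\ge 1$,
\[
n a_n\, r^{n-1}=\frac{1}{2\pi}\int_0^{2\pi}H'(re^{i\theta})\,e^{-i(n-1)\theta}\,d\theta,
\]
so that $n|a_n|r^{n-1}\le \max_{|z|=r}|H'(z)|\le \|H\|_\beta/(1-r^2)$, which rearranges to
\[
|a_n|\le \frac{\|H\|_\beta}{n\,r^{n-1}(1-r^2)},\qquad 0<r<1.
\]

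The second step is to choose $r$ as a function of $n$. For $n\ge 2$, maximizing $r^{n-1}(1-r^2)$ on $(0,1)$ gives the optimum $r_n^2=(n-1)/(n+1)$, at which $1-r_n^2=2/(n+1)$, producing the explicit bound
\[
|a_n|\le \frac{n+1}{2n}\left(\frac{n+1}{n-1}\right)^{(n-1)/2}\|H\|_\beta.
\]
The case $n=1$ is immediate: $|a_1|=|H'(0)|\le \|H\|_\beta$ by setting $z=0$ in the Bloch estimate.

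What remains, and this is the only mildly delicate point, is the elementary numerical inequality $\frac{n+1}{2n}\left(\frac{n+1}{n-1}\right)^{(n-1)/2}\le 2$ for every $n\ge 2$. Since $(n+1)/(2n)\downarrow 1/2$ and $((n+1)/(n-1))^{(n-1)/2}=(1+2/(n-1))^{(n-1)/2}\uparrow e$, the whole expression is bounded above by its limit $e/2\approx 1.359$, safely under $2$; a short monotonicity check in $n$ makes this rigorous. If one prefers to avoid even this step, the suboptimal radius $r=\sqrt{1-1/n}$ already yields $n\,r^{n-1}(1-r^2)=(1-1/n)^{(n-1)/2}\ge e^{-1/2}$ for every $n\ge 2$, hence $|a_n|\le \sqrt{e}\,\|H\|_\beta<2\|H\|_\beta$ directly. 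Either route completes the proof of $|a_n|\le 2\|H\|_\beta$ for all $n\ge 1$.
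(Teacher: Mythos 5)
Your proof is correct, and it is worth noting that the paper itself offers no argument for this lemma: it is quoted verbatim from Anderson--Clunie--Pommerenke, so there is no internal proof to compare against. Your route --- Cauchy's formula for the $(n-1)$st coefficient of $H'$ on $|z|=r$, the pointwise bound $|H'(z)|\le \|H\|_\beta/(1-|z|^2)$, and an optimization over $r$ --- is the standard way this coefficient estimate is obtained, and it in fact yields the sharper constant $\sqrt{e}<2$ (or $e/2$ with the optimal radius), so the stated bound $|a_n|\le 2\|H\|_\beta$ follows a fortiori. One caution on your first route: the inference ``one factor decreases, the other increases, hence the product is bounded by its limit'' is not a valid principle in general (take $a_n=1+1/n$ decreasing and $b_n=2-1/n$ increasing; the product exceeds the limit $2$ for every $n$), and bounding the two factors separately by $3/4$ and $e$ gives $3e/4>2$, so that step genuinely needs the monotonicity of the product in $n$, which you only gesture at. Your fallback choice $r=\sqrt{1-1/n}$, giving $n\,r^{n-1}(1-r^2)=(1-1/n)^{(n-1)/2}\ge e^{-1/2}$ and hence $|a_n|\le\sqrt{e}\,\|H\|_\beta$, is airtight and elementary, so the proposal as a whole is complete; also note the trivial but necessary observation you did include, namely that $n=1$ is handled separately by $|a_1|=|H'(0)|\le\|H\|_\beta$.
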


\begin{proof}[\textbf{Proof of Theorem \ref{p2-090}}]
We know that the Bloch space $\mathcal{B}_{Lh}$ of  sense-preserving nonvanishing logharmonic Bloch mappings is linear and affine invariant. Let us consider the classes
$$\mathcal{J}=\{\log{f}: f=h\overline{g}\in\mathcal{B}_{Lh},~~h(0)=g(0)=h'(0)=1\}$$
 and
 $$\mathcal{J}^0=\{\log{f}: f=h\overline{g}\in\mathcal{B}_{Lh},~~h(0)=g(0)=h'(0)=1,~g'(0)=0\}.$$
 It is clear that every $F\in \mathcal{J}$ has the form $F(z)=H(z)+\overline{G(z)}$ with
\begin{align}\label{p2-260}
H(z)=z+\sum\limits_{n=2}^\infty a_n z^n\quad\text{and}\quad G(z)=\sum\limits_{n=1}^\infty b_nz^n.
\end{align}

We first show that $\mathcal{J}$ is a linear and affine invariant family of normalized harmonic mappings.\\

\textbf{Affine Invariance:} For $F=\log{f}\in \mathcal{J}$ with $f=h\overline{g}\in\mathcal{B}_{Lh}$ and a constant $s\in\mathbb{D},$ we have
\begin{align}\label{p2-270}
A_s[F](z)= & \frac{F(z)+s\overline{F(z)}}{1+s\overline{F_z(0)}}\\
          = & \frac{\log{f}+s\overline{\log{f}}}{1+sg'(0)}\nonumber\\
          = & \frac{\log{f(z){\overline{f(z)}}^s}}{a},\quad\text{where}~~ a=1+sg'(0)\nonumber\\
          = & \log{f^{1/a}(z)\overline{f(z)}^{s/a}}=\log{f_a(z)}\nonumber
\end{align}
where $f_a=f^{1/a}\overline{f}^{s/a}=h_a\overline{g_a}$ with $h_a=h^{1/a}g^{s/a}$ and $g_a=h^{\overline{s/a}}g^{\overline{1/a}}.$ From Lemma \ref{p2-240}, it follows that $f_a=f^{1/a}\overline{f}^{s/a}\in\mathcal{B}_{Lh}$ for $1/a,~s/a\in\mathbb{C}.$ As,

$$h_a'(z)=\frac{h^{1/a}g^{s/a}}{a}\left\{\frac{h'(z)}{h(z)}+\frac{sg'(z)}{g(z)}\right\},$$
from the conditions $h(0)=g(0)=h'(0)=1$ and $a=1+sg'(0),$ one can easily get $h_a(0)=g_a(0)=h_a'(0)=1$ and so the affine transform $A_s[F](z)\in \mathcal{J}$ for all $F\in \mathcal{J}$ and $|s|<1.$  Therefore the family $\mathcal{J}$ is affine invariant.\\

\textbf{Linear invariance:} For $F=\log{f}\in \mathcal{J}$ with $f=h\overline{g}\in\mathcal{B}_{Lh}$ and the automorphism $\phi(z)=(z-\alpha)/(1-\overline{\alpha}z),~~\alpha\in\mathbb{D},$ we have
\begin{align}\label{p2-280}
L_\phi[F](z)= & \frac{F(\phi(z))-F(\phi(0))}{F_z(\phi(0))\phi'(0)}\\
            = & \frac{\log{f(\phi(z))}-\log{f(\alpha)}}{b}\quad\text{where}~~b=F_z(\phi(0))=\frac{h'(\alpha)}{h(\alpha)}\phi'(0)\nonumber\\
            = & \log\left(\frac{f(\phi(z))}{f(\alpha)}\right)^{1/b}=\log{f_b(z)}\nonumber
\end{align}
where
$$f_b(z)=\left(\frac{f(\phi(z))}{f(\alpha)}\right)^{1/b}=h_b(z)\overline{g_b(z)}$$
with
$$ h_b(z)=\left(\frac{h(\phi(z))}{h(\alpha)}\right)^{1/b}\quad\text{and}\quad g_b(z)=\left(\frac{h(\phi(z))}{g(\alpha)}\right)^{\overline{1/b}}.$$

From Lemma \ref{p2-240}, $f_b(z)\in \mathcal{B}_{Lh}$ for $1/b , f(\alpha)\in \mathbb{C}$ as $\mathcal{B}_{Lh}$ is a linear invariant family. Since, $$h_b'(z)=\frac{1}{b}\left(\frac{h(\phi(z))}{h(\alpha)}\right)^{1/b}\frac{h'(\phi(z))}{h(\alpha)}\phi'(z),$$
from $h(0)=g(0)=1$ and $b=\phi'(0) h'(\alpha)/h(\alpha)$, it follows that $h_b(0)=g_b(0)=1=h_b'(0)$ and so $L_\phi[F]$ belong to $\mathcal{J}.$ Hence, $\mathcal{J}$ is a linear invariant family of normalized harmonic mappings.\\

Let $F(z)=H(z)+\overline{G(z)}$ be a member of the class $\mathcal{J}.$ Then by Theorem \ref{p2-065}, the analytic function $H(z)=\log{h}$ is an analytic Bloch function and so
\begin{align}\label{p2-290}
||H||_\mathcal{B}= & |H(0)|+\sup_{z\in\mathbb{D}}(1-|z|^2)|H'(z)|<\infty.
\end{align}
Since the family $\mathcal{J}$ is affine and linear invariant, the pre-Schwarzian norm estimate of $F$ is given by (see \cite{Graf-2016} )
\begin{align*}
||P_F|| &= \sup_{z\in\mathbb{D}}(1-|z|^2)|P_F(z)|\\
        & \leq 2+2\alpha_0 \quad\text{where}~~\alpha_0=\sup_{F\in \mathcal{J}^0}|a_2|.
  \end{align*}
From Lemma \ref{p2-250} and \eqref{p2-290}, it follows that
 \begin{align*}
   ||P_F|| \leq 2+2\alpha_0\leq 2+2\sup_{F\in \mathcal{J}^0}|a_2|\leq 2+4||H||_{\beta} < \infty.
\end{align*}
We see that the pre-Schwarzian norm of $F=\log{f}$ is finite and hence $\log{f}$ is  uniformly locally univalent.
\end{proof}

Before proving our next theorem, we recall a result by Pommerenke \cite{Pommerenke-1970}, which helps us understand the proof.

\begin{lem}\label{p2-300}
An analytic function $h$ is Bloch if and only if there exists an analytic function $g\in\mathcal{S}_0$ and a finite positive constant $\lambda$ such that $h(z)=\lambda\log{g'(z)}.$
\end{lem}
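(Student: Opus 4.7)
The plan is to prove both directions of the equivalence separately, relying only on two classical facts already recalled in the introduction of the paper: (a) if $g$ is analytic and univalent in $\mathbb{D}$ then $||P_g|| \leq 6$; and (b) Becker's univalence criterion, namely $||P_g|| \leq 1$ implies $g$ is univalent. Both directions reduce essentially to one-line computations once one observes that $P_g = g''/g'$ is exactly the derivative of $\log g'$.

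For the sufficiency direction, I would assume $h(z) = \lambda \log g'(z)$ for some $g \in \mathcal{S}_0$ and a finite constant $\lambda > 0$. Differentiating gives $h'(z) = \lambda\, g''(z)/g'(z) = \lambda P_g(z)$, and the classical bound $||P_g|| \leq 6$ for univalent $g$ yields
\[
\sup_{z\in\mathbb{D}}(1-|z|^2)|h'(z)| = \lambda\,||P_g|| \leq 6\lambda < \infty,
\]
so $h$ is an analytic Bloch function.

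For the necessity direction, I would proceed constructively. Suppose $h$ is an analytic Bloch function with Bloch semi-norm $\beta_h = \sup_{z\in\mathbb{D}}(1-|z|^2)|h'(z)| < \infty$, and choose any finite real $\lambda$ with $\lambda \geq \max(\beta_h, 1)$ (the positive cutoff handles the trivial constant case $\beta_h = 0$). Since $\mathbb{D}$ is simply connected, the single-valued analytic function $h/\lambda$ exponentiates and I can define an analytic function $g$ on $\mathbb{D}$ by
\[
g'(z) = e^{h(z)/\lambda}, \qquad g(0) = 0.
\]
Then $g'$ is nonvanishing, so $g$ is locally univalent, and a direct computation gives $P_g(z) = g''(z)/g'(z) = h'(z)/\lambda$. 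Therefore
\[
||P_g|| = \frac{\beta_h}{\lambda} \leq 1,
\]
and Becker's criterion forces $g$ to be univalent, i.e., $g \in \mathcal{S}_0$. Fixing the branch of the logarithm by $\log g'(0) = h(0)/\lambda$, the identity $h(z) = \lambda \log g'(z)$ holds throughout $\mathbb{D}$.

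I do not expect any substantive obstacle: the computation is essentially forced by the definitions. The only two points requiring a little care, and where I would be attentive in writing out the proof, are the degenerate case $\beta_h = 0$ (resolved by insisting $\lambda \geq 1 > 0$, in which case $g$ is a linear map and the identity is trivial) and the choice of branch of $\log g'$ so that one obtains the clean identity $h = \lambda \log g'$ rather than equality only up to an additive constant $2\pi i k$.
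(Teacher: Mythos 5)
Your proof is correct. Note that the paper does not actually prove this lemma: it is recalled as a known result of Pommerenke \cite{Pommerenke-1970} and used as a black box in the proof of Theorem \ref{p2-092}. Your argument is essentially the classical one and is self-contained given only the two facts the paper already recalls: the Kraus bound $||P_g||\leq 6$ for univalent $g$ gives the sufficiency direction via $h'=\lambda P_g$, and for necessity the choice $g'=e^{h/\lambda}$ with $\lambda\geq\max(\beta_h,1)$ gives $||P_g||=\beta_h/\lambda\leq 1$, so Becker's criterion yields univalence of $g$. Your attention to the branch of $\log g'$ and to the degenerate constant case is appropriate; since $\mathcal{S}_0$ in this paper carries no normalization, taking $g(0)=0$ and $g'(0)=e^{h(0)/\lambda}$ is unproblematic. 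Nothing to flag.
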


\begin{proof}[\textbf{Proof of Theorem \ref{p2-092}}]
Suppose that $f=h\overline{g}$ is logharmonic Bloch mapping. Then from Theorem \ref{p2-065} the function $\log{h(z)}$
 is analytic Bloch and so is $\log{g(z)}.$ Therefore, by Lemma \ref{p2-300}, there exist two finite positive real constants $\lambda_1, \lambda_2$ and two analytic functions $H, G\in\mathcal{S}_0$ such that
 \begin{align*}
 h(z)=H'(z)^{\lambda_1}\quad\text{and}\quad g(z)=G'(z)^{\lambda_2},
 \end{align*}
 which further implies that $$f(z)=H'(z)^{\lambda_1}\overline{G'(z)^{\lambda_2}}.$$\\

 Conversely, suppose $f(z)=h(z)\overline{g(z)}=H'(z)^{\lambda_1}\overline{G'(z)^{\lambda_2}}$ with finite positive numbers $\lambda_1, \lambda_2.$ Then
 \begin{align*}
 \beta_{\log{h}}=\sup_{z\in\mathbb{D}}(1-|z|^2)^2\big|\frac{h'(z)}{h(z)}\big|=\lambda_1||P_H||\leq6\lambda_1<\infty,
 \end{align*}
 which is sufficient to show that $\log{h}$ is analytic Bloch and hence, by Theorem \ref{p2-065}, $f$ is a logharmonic Bloch mapping.
 \end{proof}

\begin{proof}[\textbf{Proof of Theorem \ref{p2-095}}]
Let $f=h\overline{g}$ be a logharmonic Bloch mapping of the form $f(z)=H'(z)^{\lambda_1}\overline{G'(z)^{\lambda_2}}$ with $H, G\in\mathcal{S}_1.$ The  distortion theorem for the analytic functions $H$ and $G$ (see \cite{Duren-1983}), for $|z|=r<1$ is
\begin{align}\label{p2-330}
\frac{1-r}{(1+r)^3}\leq |H'(z)|,|G'(z)|\leq \frac{1+r}{(1-r)^3}.
\end{align}
It is very easy to see that $$|h(z)|=|H'(z)|^{\lambda_1},~|g(z)|=|G'(z)|^{\lambda_2},~|f(z)|=|H'(z)|^{\lambda_1}|G'(z)|^{\lambda_2}.$$
Thus $(i)$ and $(ii)$ follow immediately from \eqref{p2-330}.\\

 Since $H\in\mathcal{S}_1,$ it follows that the pre-Schwarzian norm of $H$ satisfies
\begin{align}\label{p2-340}
||P_H||\leq 6.
\end{align}
A direct calculation shows that
$$\frac{f_z(z)}{f(z)}=\lambda_1\frac{H''(z)}{H'(z)}=\lambda_1P_H(z)$$ and so from \eqref{p2-340}
\begin{align*}
\sup_{z\in\mathbb{D}}(1-|z|^2)\left|\frac{f_z(z)}{f(z)}\right|= \sup_{z\in\mathbb{D}}(1-|z|^2)\lambda_1|P_H(z)|=\lambda_1||P_H||\leq 6\lambda_1.
\end{align*}

To show that the estimates are sharp, we consider the sense-preserving logharmonic mapping $f(z)=k'(z)^{\lambda_1}\overline{k'(z)^{\lambda_2}}$ with $k(z)=z/(1-z)^2$. Here $k'(z)=(1+z)/(1-z)^3.$ Thus for $z=r\in[0,1),$ we have
$$h(r)=\left[\frac{1+r}{(1-r)^3}\right]^{\lambda_1},~g(r)=\left[\frac{1+r}{(1-r)^3}\right]^{\lambda_2},~f(r)=\left[\frac{1+r}{(1-r)^3}\right]^{\lambda_1}\left[\frac{1+r}{(1-r)^3}\right]^{\lambda_2}.$$
Therefore the right hand side inequalities are sharp at $z=r$ and the left hand side inequalities are at $z=-r.$\\

 It is very easy to see that $f_z/f=\lambda_1P_k$ and so
\begin{align*}
\sup_{z\in\mathbb{D}}(1-|z|^2)\left|\frac{f_z(z)}{f(z)}\right|=\sup_{z\in\mathbb{D}}(1-|z|^2)\lambda_1|P_k(z)|= \lambda_1||P_k||= 6\lambda_1.
\end{align*}
\end{proof}

\noindent\textbf{Declarations}\\

\noindent\textbf{Conflict of interest:} The authors declare that they have no conflict of interest.\\

\noindent\textbf{Data availability:}
Data sharing not applicable to this article as no data sets were generated or analyzed during the current study.\\

\noindent\textbf{Authors Contributions:}
All authors contributed equally to the investigation of the problem and the order of the authors is given alphabetically according to their surname. All authors read and approved the final manuscript. \\

\noindent\textbf{Acknowledgement:}
The second named author thanks the Department Of Science and Technology, Ministry Of Science and Technology, Government Of India
for the financial support through DST-INSPIRE Fellowship (No. DST/INSPIRE Fellowship/2018/IF180967).\\

\end{document}